\documentclass[reqno, 12pt]{amsart}
\usepackage{enumerate}
\linespread{1.1}
\setlength{\topmargin}{-0.5cm}
\setlength{\oddsidemargin}{0.5cm}\setlength{\evensidemargin}{0.5cm}
\setlength{\textwidth}{15.5truecm}\setlength{\textheight}{23truecm}
\usepackage{amssymb,url,color}
\usepackage{hyperref}
\usepackage{amsmath,amsthm}
\usepackage{amsfonts}
\usepackage{mathrsfs}
\usepackage{stmaryrd}
\usepackage{graphicx}
\usepackage{bbm}

\allowdisplaybreaks[4]

\newtheorem{thm}{Theorem}[section]

\newtheorem{lem}{Lemma}[section]
\newtheorem{rem}{Remark}[section]
\newtheorem{exa}{Example}[section]
\theoremstyle{Problem}

\theoremstyle{definition}

\numberwithin{equation}{section}
\newcommand{\pp}{\mathbb{P}}
\newcommand{\ee}{\mathbb{E}}

\newcommand{\xx}{\mathcal{X}}

\newcommand{\dd}{\mathfrak{D}}
\newcommand{\rr}{\mathbb{R}}

\def\beq{\begin{equation}}
\def\deq{\end{equation}}
\setlength{\parskip}{0.5em}

\allowdisplaybreaks %%%% 多行公式自动分页

\bfseries\rmfamily %%加粗

\begin{document}

\title[plug-in estimator of entropy]
{Some asymptotic behaviors for the plug-in estimator of entropy}
\thanks{This work is supported by National Natural Science Foundation of China (NSFC-11971154).}

\author[Y. Miao]{Yu Miao}
\address[Y. Miao]{College of Mathematics and Information Science, Henan Normal University, Henan Province, 453007, China; Henan Engineering Laboratory for Big Data Statistical Analysis and Optimal Control, Henan Normal University, Henan Province, 453007, China.} \email{\href{mailto: Y. Miao
<yumiao728@gmail.com>}{yumiao728@gmail.com}; \href{mailto: Y. Miao <yumiao728@126.com>}{yumiao728@126.com}}

\author[Z. H. Yu]{Zhenhong Yu}
\address[Z. H. Yu]{College of Mathematics and Information Science, Henan Normal University, Henan Province, 453007, China.} \email{\href{mailto: Z. H. Yu
<zhenhongyu2022@126.com>}{zhenhongyu2022@126.com}}

\begin{abstract}
In the present paper, we consider the plug-in estimator of Shannon's entropy defined on a finite alphabet which is assumed to dynamically vary as the sample size increases. The asymptotic behaviors for the plug-in estimator, such as, asymptotic normality, Berry-Esseen bound and moderate deviation principle, are established.
\end{abstract}

\keywords{Entropy, plug-in estimator, asymptotic normality, Berry-Esseen bound, moderate deviation principle.}
\subjclass[2020]{62G05, 62G20, 94A24, 60F05, 60F10}
\maketitle

\section{Introduction}
Suppose that $X$ is a discrete random variable with an unknown distribution $\{p(i), i\in\xx\}$ on an alphabet $\xx=\{i, 1\le i\le K\}$,
where $K$ denotes either a finite integer or $\infty$ and
$$
p(i)=\pp(X=i),\ \ \ 1\le i\le K.
$$
Shannon \cite{Shannon} introduced the following entropy
$$
 H=\ee(-\ln p(X))=-\sum_{i=1}^K p(i)\ln p(i),
$$
which is often refereed to as Shannon's entropy. Let $\{X_n, n\ge 1\}$ be a sequence of independent identically distributed random variables with common distribution $\{p(i), i\in\xx\}$ on $\xx$. Define the plug-in estimator of the entropy $H$ as
\beq\label{H}
\hat H_n=-\sum_{i=1}^K \hat p_n(i)\ln \hat p_n(i)
\deq
where
$$
\hat p_n(i)=\frac{1}{n}\sum_{j=1}^nI_{\{X_j=i\}}
$$
is the empirical distribution induced by the samples $(X_1, X_2, \cdots, X_n)$ on $\xx$.

For the case that $K$ is fixed and finite, let $\sigma^2=Var(\ln p(X_1))>0$, then Basharin \cite{Basharin} gave the following central limit theorem for $\hat H_n$
$$
\frac{\sqrt{n}}{\sigma}(\hat H_n-H)\xrightarrow{\dd} N(0,1)
$$
where $N(0, 1)$ denotes the standard normal random variable. For the case that $K=K(n)$ varies as the sample size $n$ increase,
let $\{X_{k,n}, 1\le k\le K(n), n\ge 1\}$ be an array of independent identically distributed random variables with common distribution $\{p_n(i),  1\le i\le K(n), n\ge 1\}$, i.e., for any $n\ge 1$,
$$
\pp(X_{1,n}=i)=p_n(i),  \ \ \  \ 1\le i\le K(n).
$$
Assume that the sequence $K(n)$ satisfies
\beq\label{0}
K(n)\to\infty,\ \ K(n)=o(\sqrt{n})\ \ \text{and}\ \ \liminf_{n\to\infty}n^{1-\alpha}\sigma^2_n>0
\deq
for some $\alpha>0$, where
$$
 \sigma_n^2=Var(\ln p_n(X_{1,n}))=\sum_{i=1}^{K(n)}p_{n}(i)\ln^2 p_{n}(i)-\left(\sum_{i=1}^{K(n)}p_{n}(i)\ln p_{n}(i)\right)^2,
$$
then Paninski \cite{Paninski} proved
$$
\frac{\sqrt{n}}{\sigma_n}(\hat H_n-H_n)\xrightarrow{\dd} N(0,1),
$$
where $H_n$ denotes Shannon's entropy with distribution $\{p_n(i), 1\le i\le K(n), n\ge 1\}$. For the case $K=\infty$, Antos and Kontoyiannis \cite{AnKo} studied the rates of convergence for $\hat H_n$ under a variety of tail conditions on $\{p(i), i\ge 1\}$ and showed that no universal rate of convergence exists for any sequence of estimators.
If $\{X_n, n\ge 1\}$ is a sequence of independent identically distributed random variables with common nonuniform distribution $\{p(i), i\ge 1\}$ satisfying
$\ee(\ln p(X_1))^2<\infty$. Assume that there exists an integer valued function $K(n)$  such that
$$
K(n)\to\infty,\ \ K(n)=o(\sqrt{n})\ \ \text{and}\ \ \sqrt{n}\sum_{i=K(n)}^\infty p(i)\ln p(i)\to 0,
$$
then Zhang and Zhang \cite{ZhZh12} obtained
$$
\frac{\sqrt{n}}{\sigma}(\hat H_n-H)\xrightarrow{\dd} N(0,1)
$$
where $\sigma^2=Var(\ln p(X_1))$.

Based on the above works, in the present paper, we shall continue to study the asymptotic behaviors for the plug-in estimator of entropy defined on a finite alphabet which is assumed to dynamically vary as the sample size increases, and establish the asymptotic normality, Berry-Esseen bound and moderate deviation principle of the plug-in estimator of entropy. Throughout the following sections, let the symbol $C$ represent positive constants whose values may change from one place to another.

\section{Main results}

In this section, we consider the case $K=K(n)$, which is assumed to dynamically vary as the sample size $n$ increases, and define
$$
 \sigma_n^2=Var(\ln p_n(X_{1,n}))=\sum_{i=1}^{K(n)}p_{n}(i)\ln^2 p_{n}(i)-\left(\sum_{i=1}^{K(n)}p_{n}(i)\ln p_{n}(i)\right)^2.
$$
Firstly, we study the asymptotic normality of the plug-in estimator.

\begin{thm}\label{thm2-1} Let $\{X_{k,n}, 1\le k\le K(n), n\ge 1\}$ be an array of independent identically distributed random variables with common nonuniform distribution $\{p_n(i),  1\le i\le K(n), n\ge 1\}$, i.e., for any $n\ge 1$,
$$
\pp(X_{1,n}=i)=p_n(i),  \ \ \  \ 1\le i\le K(n).
$$
Assume that the sequence $K(n)$ satisfies
\beq\label{1}
K(n)\to\infty,\ \ K(n)=o(\sqrt{n}\sigma_n)\ \  \text{and}\ \ \frac{\sqrt{n}\sigma_n}{\ln n}\to \infty,
\deq
then we have
$$
\frac{\sqrt{n}}{\sigma_n}(\hat H_n-H_n)\xrightarrow{\dd} N(0,1),
$$
where $H_n$ denotes Shannon's entropy with distribution $\{p_n(i), 1\le i\le K(n), n\ge 1\}$.
\end{thm}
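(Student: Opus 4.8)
The plan is a delta-method argument: Taylor-expand $\phi(x)=-x\ln x$ to second order, identify the first-order term as a normalized sum of i.i.d.\ summands to which the Lindeberg triangular-array CLT applies, and show the quadratic remainder is negligible after multiplying by $\sqrt n/\sigma_n$. Concretely, I would write $\hat H_n-H_n=\sum_{i=1}^{K(n)}\big(\phi(\hat p_n(i))-\phi(p_n(i))\big)$ and, using $\phi'(x)=-\ln x-1$ together with $\sum_i(\hat p_n(i)-p_n(i))=0$ to kill the constant part of the derivative, separate the linear term:
\beq
\hat H_n-H_n=-\sum_{i=1}^{K(n)}\ln p_n(i)\,\big(\hat p_n(i)-p_n(i)\big)+R_n=\frac1n\sum_{j=1}^n\big(Z_{j,n}-H_n\big)+R_n,
\deq
where $Z_{j,n}:=-\ln p_n(X_{j,n})\ge 0$ satisfies $\ee Z_{1,n}=H_n$ and $Var(Z_{1,n})=\sigma_n^2$ (positive for large $n$ since the distribution is nonuniform and \eqref{1} holds), and $R_n=\sum_i R_{n,i}$ with $R_{n,i}=\phi(\hat p_n(i))-\phi(p_n(i))-\phi'(p_n(i))(\hat p_n(i)-p_n(i))$.

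For the leading term I would verify the Lindeberg condition for the array $(Z_{j,n}-H_n)_{1\le j\le n}$, i.e.\ $\sigma_n^{-2}\,\ee\big[(Z_{1,n}-H_n)^2 I_{\{|Z_{1,n}-H_n|>\varepsilon\sqrt n\sigma_n\}}\big]\to0$ for each $\varepsilon>0$, which yields $\frac1{\sqrt n\sigma_n}\sum_{j=1}^n(Z_{j,n}-H_n)\xrightarrow{\dd}N(0,1)$. Using $Z_{1,n}\ge0$ and $H_n\le\ln K(n)=o(\sqrt n\sigma_n)$, for $n$ large the event inside forces $p_n(X_{1,n})<e^{-\varepsilon\sqrt n\sigma_n}=:\delta_n$; since $x\mapsto x(\ln x)^2$ is increasing on $(0,e^{-2})$, the expectation is bounded by $C\,K(n)\,\delta_n\big[(\ln\delta_n)^2+(\ln K(n))^2\big]\le C\,K(n)\,e^{-\varepsilon\sqrt n\sigma_n}\big[(\sqrt n\sigma_n)^2+(\ln n)^2\big]$. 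Dividing by $\sigma_n^2$ and using $K(n)=o(\sqrt n)$ together with $\sigma_n^{-2}\le n(\ln n)^{-2}$ for large $n$ (both consequences of $\sqrt n\sigma_n/\ln n\to\infty$), this is at most $C\,n^{3/2}e^{-\varepsilon\sqrt n\sigma_n}$, which tends to $0$ because $\sqrt n\sigma_n/\ln n\to\infty$ makes $e^{-\varepsilon\sqrt n\sigma_n}$ smaller than any negative power of $n$.

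For the remainder I would show $\frac{\sqrt n}{\sigma_n}R_n\to0$ in $L^1$, which is enough. A Taylor estimate gives $|R_{n,i}|\le\frac{(\hat p_n(i)-p_n(i))^2}{2\min(\hat p_n(i),p_n(i))}$ on $\{\hat p_n(i)>0\}$, while $R_{n,i}=-p_n(i)$ on $\{\hat p_n(i)=0\}$. Using $\frac1{\min(a,b)}\le\frac1a+\frac1b$, the identity $\ee\frac{(\hat p_n(i)-p_n(i))^2}{p_n(i)}=\frac{1-p_n(i)}{n}$, the elementary inequality $\ee\big[\frac{(N-np)^2}{N}I_{\{N\ge1\}}\big]\le4$ for $N\sim\mathrm{Bin}(n,p)$ (from $\frac1N I_{\{N\ge1\}}\le\frac2{N+1}$ and an exact evaluation of $\ee\frac{(N-np)^2}{N+1}$), and $p_n(i)(1-p_n(i))^n\le\frac1{en}$, one gets $\ee|R_{n,i}|\le C/n$, hence $\ee|R_n|\le CK(n)/n$ and $\frac{\sqrt n}{\sigma_n}\ee|R_n|\le CK(n)/(\sqrt n\sigma_n)\to0$ by \eqref{1}. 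Slutsky's theorem then combines the two pieces to give $\frac{\sqrt n}{\sigma_n}(\hat H_n-H_n)\xrightarrow{\dd}N(0,1)$.

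I expect the verification of the Lindeberg condition to be the main obstacle: it is the only step using all three parts of \eqref{1} simultaneously, and it hinges on the slightly delicate point that $e^{-\varepsilon\sqrt n\sigma_n}$ overwhelms every polynomial in $n$ precisely because $\sqrt n\sigma_n/\ln n\to\infty$, combined with controlling the contribution of low-probability letters to $\sigma_n^2$ and to $H_n$ via monotonicity of $x(\ln x)^2$. The remainder estimate is comparatively routine, its only technical ingredient being the inverse-first-moment bound for the binomial.
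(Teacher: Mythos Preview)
Your proposal is correct and follows the same overall architecture as the paper: the identical decomposition into a linear term $\frac{1}{n}\sum_j(Z_{j,n}-H_n)$ plus a second-order remainder, the Lindeberg CLT for the linear part (verified via the same observation that the truncation event forces $p_n(X_{1,n})$ below an exponentially small threshold, and then $e^{-\varepsilon\sqrt n\sigma_n}$ beats any power of $n$ because $\sqrt n\sigma_n/\ln n\to\infty$), and Slutsky to combine. One small slip: you write ``$K(n)=o(\sqrt n)$'' in the Lindeberg step, but only $K(n)=o(\sqrt n\sigma_n)$ is assumed; fortunately your final bound depends only on $\sqrt n\sigma_n\gg\ln n$, so the conclusion is unaffected.

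The one substantive difference is the treatment of the remainder. Your $R_n$ is exactly $-\sum_i\hat p_n(i)\ln\frac{\hat p_n(i)}{p_n(i)}$, i.e.\ minus the empirical Kullback--Leibler divergence. The paper exploits this directly: from $\frac{h}{1+h}<\ln(1+h)<h$ one gets the sandwich $0\le \sum_i\hat p_n(i)\ln\frac{\hat p_n(i)}{p_n(i)}\le \sum_i\frac{(\hat p_n(i)-p_n(i))^2}{p_n(i)}$, and then $\ee\big[\sum_i\frac{(\hat p_n(i)-p_n(i))^2}{p_n(i)}\big]=\frac{K(n)-1}{n}$ finishes in one line. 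Your route via the Lagrange remainder $|R_{n,i}|\le\frac{(\hat p_n(i)-p_n(i))^2}{2\min(\hat p_n(i),p_n(i))}$ is valid but costs an extra inverse-binomial-moment estimate (your bound $\ee\big[\tfrac{(N-np)^2}{N}I_{\{N\ge1\}}\big]\le4$ is indeed true, via $\ee\tfrac{(N-np)^2}{N+1}\le2$); the paper's KL--$\chi^2$ inequality avoids this entirely by never letting $\hat p_n(i)$ appear in a denominator.
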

\begin{rem}
If $\sqrt{n}\sigma_n=\ln^{t} n$ for some $t>1$, then we can not choose $\alpha>0$ such that the condition (\ref{0}) holds, i.e., for any $\alpha>0$, it follows that
$$
\lim_{n\to\infty}n^{1-\alpha}\sigma^2_n=0.
$$
However, it is easy to check that the condition (\ref{1}) holds, i.e.,
$$
\frac{\sqrt{n}\sigma_n}{\ln n}= \ln^{t-1} n \to \infty.
$$ Hence the condition (\ref{1}) is weaker than the condition (\ref{0}).
\end{rem}
\begin{thm}\label{thm2-2}
 Let $\{X_{k,n}, 1\le k\le K(n), n\ge 1\}$ be an array of independent identically distributed random variables with common nonuniform distribution $\{p_n(i),  1\le i\le K(n), n\ge 1\}$, i.e., for any $n\ge 1$,
$$
\pp(X_{1,n}=i)=p_n(i),  \ \ \  \ 1\le i\le K(n)
$$
where the sequence $K(n)$ satisfies the condition (\ref{1}).
Then for any $0\le \delta\le 1$, we have
\beq\label{3}
\sup_x\left|\pp\left(\frac{\sqrt{n}}{\sigma_n}(\hat H_n-H_n)\le x\right)-\Phi(x)\right|\le C\left(\frac{\ee|T_{1,n}|^{2+\delta}}{n^{\delta/2}\sigma_n^{2+\delta}}+\sqrt{\frac{K(n)}{\sqrt{n}\sigma_n}}\right),
\deq
where $\Phi$ denotes the standard normal distribution function, $C$ is a positive constant and
$$
\ee|T_{1,n}|^{2+\delta}=\ee\left|\sum_{i=1}^{K(n)}(I_{\{X_{1, n}=i\}}-p_n(i))\ln p_n(i)\right|^{2+\delta}.
$$
\end{thm}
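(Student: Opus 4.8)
The plan is to peel off from $\dfrac{\sqrt n}{\sigma_n}(\hat H_n-H_n)$ a normalized sum of i.i.d.\ random variables, handle that term by the classical Berry--Esseen theorem, and show the Taylor-expansion error is negligible in $L^1$ with the rate $K(n)/n$.

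\emph{Decomposition.} First I would set $\phi(x)=-x\ln x$ and expand each $\phi(\hat p_n(i))$ about $p_n(i)$, writing $g_i(x)=\phi(x)-\phi(p_n(i))-\phi'(p_n(i))(x-p_n(i))$; since $\phi$ is strictly concave on $(0,1]$, one has $g_i\le 0$ on $[0,1]$. Because $\sum_i(\hat p_n(i)-p_n(i))=0$, the linear part collapses to $-\sum_i\ln p_n(i)\,(\hat p_n(i)-p_n(i))=-\frac1n\sum_{j=1}^nT_{j,n}$ with $T_{j,n}=\sum_{i=1}^{K(n)}(I_{\{X_{j,n}=i\}}-p_n(i))\ln p_n(i)$, which are i.i.d., centered, with variance $\sigma_n^2$. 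Hence $\frac{\sqrt n}{\sigma_n}(\hat H_n-H_n)=U_n+V_n$, where $U_n=-\frac{1}{\sqrt n\,\sigma_n}\sum_{j=1}^nT_{j,n}$ and $V_n=\frac{\sqrt n}{\sigma_n}R_n$ with $R_n=\sum_{i}g_i(\hat p_n(i))\le 0$.

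\emph{Main term.} For $0<\delta\le1$, the classical Berry--Esseen theorem in the form valid for i.i.d.\ summands with finite absolute moment of order $2+\delta$, together with the continuity and symmetry of $\Phi$, gives $\sup_x|\pp(U_n\le x)-\Phi(x)|\le C\,\ee|T_{1,n}|^{2+\delta}/(n^{\delta/2}\sigma_n^{2+\delta})$; for $\delta=0$ one has $\ee|T_{1,n}|^2=\sigma_n^2$, so the right side of (\ref{3}) exceeds $C$ and the bound is trivial.

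\emph{Remainder (the crux).} Writing $p=p_n(i)$ and $B=n\hat p_n(i)\sim\mathrm{Bin}(n,p)$, a direct computation gives $-g_i(\hat p_n(i))=\hat p_n(i)\ln(\hat p_n(i)/p)-\hat p_n(i)+p\ge 0$ and $\ee[-g_i(\hat p_n(i))]=\frac1n\bigl(\ee[B\ln B]-np\ln(np)\bigr)$. Using the identity $\ee[B\ln B]=np\,\ee[\ln(1+B')]$ with $B'\sim\mathrm{Bin}(n-1,p)$ and Jensen's inequality, $\ee[B\ln B]\le np\ln(1+np)$, so $\ee[-g_i(\hat p_n(i))]\le p\ln\bigl(1+\frac1{np}\bigr)\le\frac1n$; summing over $i$ yields $\ee|R_n|=-\ee R_n\le K(n)/n$. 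I expect this to be the main obstacle: the bound must be uniform in the possibly tiny $p_n(i)$, and the naive mean-value remainder $\tfrac12\phi''(\xi_i)(\hat p_n(i)-p_n(i))^2$ involves $1/\xi_i$, which blows up when $\hat p_n(i)$ is small; the Bregman/Jensen identity for $\ee[B\ln B]$ is what avoids the casework.

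\emph{Assembly.} Finally I would invoke the elementary inequality $\sup_x|\pp(U_n+V_n\le x)-\Phi(x)|\le\sup_x|\pp(U_n\le x)-\Phi(x)|+\pp(|V_n|\ge\varepsilon)+\varepsilon/\sqrt{2\pi}$, valid for every $\varepsilon>0$, bound $\pp(|V_n|\ge\varepsilon)\le\varepsilon^{-1}\frac{\sqrt n}{\sigma_n}\ee|R_n|\le\varepsilon^{-1}\frac{K(n)}{\sqrt n\sigma_n}$ by Markov, and choose $\varepsilon=\sqrt{K(n)/(\sqrt n\sigma_n)}$ (which tends to $0$ by (\ref{1})), so that $\pp(|V_n|\ge\varepsilon)+\varepsilon/\sqrt{2\pi}\le C\sqrt{K(n)/(\sqrt n\sigma_n)}$; combined with the main-term estimate this is precisely (\ref{3}).
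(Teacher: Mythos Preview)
Your argument is correct and follows the same overall architecture as the paper: the identical decomposition into a normalized i.i.d.\ sum plus the Kullback--Leibler remainder $D(\hat p_n\Vert p_n)$, a Berry--Esseen bound on the sum, Markov's inequality on the remainder, and the perturbation lemma $\sup_x|\pp(U+V\le x)-\Phi(x)|\le\sup_x|\pp(U\le x)-\Phi(x)|+\varepsilon/\sqrt{2\pi}+\pp(|V|>\varepsilon)$ with the optimal choice $\varepsilon=\sqrt{K(n)/(\sqrt n\,\sigma_n)}$.

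The technical differences are minor and go in opposite directions. For the main term, the paper invokes the Chen--Shao Stein-method bound (their Lemma~\ref{lem1}) and then dominates both truncated moments by $\ee|T_{1,n}/(\sqrt n\,\sigma_n)|^{2+\delta}$; your direct appeal to the classical Lyapunov-order Berry--Esseen theorem is cleaner and yields the same estimate. For the remainder you work harder than necessary: the paper simply uses $\ln(1+h)<h$ to get the $\chi^2$ upper bound $D(\hat p_n\Vert p_n)\le\sum_i(\hat p_n(i)-p_n(i))^2/p_n(i)$, whose expectation is exactly $(K(n)-1)/n$. Your size-biased identity $\ee[B\ln B]=np\,\ee[\ln(1+B')]$ with Jensen is perfectly valid and elegant, but it reproduces the same $O(K(n)/n)$ bound at greater cost; the feared blow-up of $\phi''(\xi_i)=-1/\xi_i$ that motivated it is already circumvented by the $\chi^2$ inequality, which needs no mean-value form.
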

\begin{rem}\label{rem-2}
By using the elementary inequality $(a+b)^{2+\delta}\le C(a^{2+\delta}+b^{2+\delta})$, we have
$$
\aligned
\ee|T_{1,n}|^{2+\delta}=&\ee\left|\sum_{i=1}^{K(n)}(I_{\{X_{1, n}=i\}}-p_n(i))\ln p_n(i)\right|^{2+\delta}\\
\le &C\ee\left|\sum_{i=1}^{K(n)}I_{\{X_{1, n}=i\}}\ln p_n(i)\right|^{2+\delta}+C\left|-\sum_{i=1}^{K(n)}p_n(i)\ln p_n(i)\right|^{2+\delta}\\
= &C\sum_{i=1}^{K(n)}\ee\left[\left|\ln p_n(i)\right|^{2+\delta}I_{\{X_{1, n}=i\}}\right]+ C\left|H_n\right|^{2+\delta}\\
= &C\sum_{i=1}^{K(n)} p_n(i)\left|\ln p_n(i)\right|^{2+\delta}+ C\left|H_n\right|^{2+\delta}.
\endaligned
$$
\end{rem}

\begin{thm}\label{thm2-3}
 Let $\{X_{k,n}, 1\le k\le K(n), n\ge 1\}$ be an array of independent identically distributed random variables with common nonuniform distribution $\{p_n(i),  1\le i\le K(n), n\ge 1\}$, i.e., for any $n\ge 1$,
$$
\pp(X_{1,n}=i)=p_n(i),  \ \ \  \ 1\le i\le K(n).
$$
Assume that there exists a positive constant $\delta>0$ such that
\beq\label{4}
\sup_{n\ge 1}\ee\exp\left(\frac{\delta}{\sigma_n}\left|\sum_{i=1}^{K(n)}(I_{\{X_{1, n}=i\}}-p_n(i))\ln p_n(i)\right|\right)<\infty.
\deq
Then for any $r > 0$, we have
\beq
\lim_{n\rightarrow\infty}\frac{1}{b_n^2}\log\pp
\left(\frac{\sqrt{n}}{b_n\sigma_n}|\hat{H}_n-H|>r\right)= -\frac{r^2}{2}
\deq
where the moderate deviation scale $\{b_n, n\ge 1\}$ is a sequence of positive numbers satisfying
$$
b_n\rightarrow\infty,\ \ \frac{b_n}{\sqrt{n}}\rightarrow 0
$$
and
$$
\lim_{n\rightarrow\infty}\frac{1}{b_n^2}\log \left[\sum_{i=1}^{K(n)}\exp\left(-2\varepsilon\sqrt{n}b_n\sigma_np_n^2(i)\right)\right]=-\infty.
$$
\end{thm}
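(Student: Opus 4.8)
The plan is to split $\hat H_n-H_n$ into a linear statistic that carries the moderate deviations and a nonlinear remainder that is negligible at speed $b_n^2$. Expanding $x\mapsto-x\ln x$ to second order at each $p_n(i)$ and using $\sum_{i=1}^{K(n)}(\hat p_n(i)-p_n(i))=0$, one gets the exact identity $\hat H_n-H_n=L_n-D(\hat p_n\|p_n)$, where
$$
L_n:=-\sum_{i=1}^{K(n)}\ln p_n(i)\,(\hat p_n(i)-p_n(i))=-\frac1n\sum_{j=1}^nT_{j,n},\qquad T_{j,n}=\sum_{i=1}^{K(n)}(I_{\{X_{j,n}=i\}}-p_n(i))\ln p_n(i),
$$
the $T_{j,n}$ being i.i.d.\ in $j$ for each fixed $n$ with $\ee T_{1,n}=0$, $\ee T_{1,n}^2=\sigma_n^2$, and $D(\hat p_n\|p_n)=\sum_i\hat p_n(i)\ln\frac{\hat p_n(i)}{p_n(i)}$ the relative entropy, which satisfies $0\le D(\hat p_n\|p_n)\le\chi_n^2:=\sum_{i=1}^{K(n)}(\hat p_n(i)-p_n(i))^2/p_n(i)$.

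For the linear part, set $Z_n:=\frac{\sqrt n}{b_n\sigma_n}L_n=-(b_n\sigma_n\sqrt n)^{-1}\sum_{j=1}^nT_{j,n}$, a normalized sum of i.i.d.\ centered random variables. I would check the G\"artner--Ellis hypothesis: for fixed $\lambda\in\rr$,
$$
\frac1{b_n^2}\log\ee\exp(\lambda b_n^2Z_n)=\frac n{b_n^2}\log\ee\exp\!\Big(-\frac{\lambda b_n}{\sqrt n\,\sigma_n}T_{1,n}\Big),
$$
and since $b_n/\sqrt n\to0$ the exponent equals $u_n\cdot(T_{1,n}/\sigma_n)$ with $u_n:=-\lambda b_n/\sqrt n\to0$, $\ee[T_{1,n}/\sigma_n]=0$, $\ee[(T_{1,n}/\sigma_n)^2]=1$. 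Hypothesis (\ref{4}) provides $\sup_n\ee\exp(\delta|T_{1,n}|/\sigma_n)<\infty$, which controls the cubic term uniformly in $n$ in the expansion $\log\ee\exp(u_nT_{1,n}/\sigma_n)=\tfrac{u_n^2}{2}+O(u_n^3)$; hence the displayed quantity tends to $\lambda^2/2$. By the G\"artner--Ellis theorem $\{Z_n\}$ obeys the large deviation principle at speed $b_n^2$ with rate function $x\mapsto x^2/2$, and in particular $b_n^{-2}\log\pp(|Z_n|>r)\to-r^2/2$ for every $r>0$.

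It then remains to show that the remainder is exponentially negligible, i.e.\ that $b_n^{-2}\log\pp\big(\frac{\sqrt n}{b_n\sigma_n}D(\hat p_n\|p_n)>\eta\big)\to-\infty$ for every $\eta>0$; the conclusion then follows from the theorem on exponentially equivalent families (Dembo--Zeitouni). As $D(\hat p_n\|p_n)\le\chi_n^2$, I would use the coordinatewise radii $t_{i,n}:=\sqrt{\eta b_n\sigma_n/\sqrt n}\;p_n(i)^{3/2}$. On $\bigcap_i\{|\hat p_n(i)-p_n(i)|\le t_{i,n}\}$ one has $\chi_n^2\le\sum_i t_{i,n}^2/p_n(i)=\frac{\eta b_n\sigma_n}{\sqrt n}\sum_i p_n^2(i)\le\frac{\eta b_n\sigma_n}{\sqrt n}$, so $\{\chi_n^2>\eta b_n\sigma_n/\sqrt n\}\subset\bigcup_i\{|\hat p_n(i)-p_n(i)|>t_{i,n}\}$, and Bernstein's inequality applied to $n\hat p_n(i)\sim\mathrm{Bin}(n,p_n(i))$ bounds each term by $2\exp(-c_0\eta\sqrt n\,b_n\sigma_np_n^2(i))$ for a numerical $c_0>0$, whence
$$
\frac1{b_n^2}\log\pp\Big(\frac{\sqrt n}{b_n\sigma_n}D(\hat p_n\|p_n)>\eta\Big)\le\frac1{b_n^2}\log\Big(2\sum_{i=1}^{K(n)}e^{-c_0\eta\sqrt n\,b_n\sigma_np_n^2(i)}\Big)\longrightarrow-\infty
$$
by the last hypothesis imposed on $\{b_n\}$ (applied with $\varepsilon=c_0\eta/2$). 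Combining this with the large deviation principle for $Z_n$ gives the assertion.

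The main obstacle is the choice of radii in the last step: they must simultaneously keep the aggregate $\sum_i t_{i,n}^2/p_n(i)$ below the target $\eta b_n\sigma_n/\sqrt n$---which, together with $\sum_i p_n^2(i)\le1$, forces $t_{i,n}\propto p_n(i)^{3/2}$---and make the per-coordinate Bernstein exponents reproduce exactly the quantities $\sqrt n\,b_n\sigma_np_n^2(i)$ occurring in the hypothesis on $\{b_n\}$. One must also verify that for these radii Bernstein's bound lies in its sub-Gaussian regime, i.e.\ that $t_{i,n}=o(p_n(i))$ uniformly in $i$, which is where the admissible size of $b_n\sigma_n$ relative to $\sqrt n$ comes in. A secondary technical point is the uniformity (in $n$) of the cubic remainder in the cumulant expansion of the linear term, which is exactly what hypothesis (\ref{4}) is designed to supply.
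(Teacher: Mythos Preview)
Your proposal is correct and follows essentially the same architecture as the paper's proof: the identical decomposition $\hat H_n-H_n=L_n-D(\hat p_n\|p_n)$, the same G\"artner--Ellis verification for $L_n$ (with hypothesis~(\ref{4}) controlling the cubic remainder in the cumulant expansion), the same sandwich $0\le D(\hat p_n\|p_n)\le\chi_n^2$, and the same coordinatewise union bound to establish exponential negligibility of the remainder.

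The only substantive difference is in the concentration step. The paper allocates the $\chi_n^2$ budget with weights $p_n(i)$ rather than $p_n^2(i)$, i.e.\ it takes radii $|\hat p_n(i)-p_n(i)|>\sqrt{\varepsilon b_n\sigma_n/\sqrt n}\,p_n(i)$ (not $p_n(i)^{3/2}$), and then applies \emph{Hoeffding's} inequality for $[0,1]$-valued summands. Because Hoeffding ignores the variance, it yields the exponent $-2\varepsilon\sqrt n\,b_n\sigma_n\,p_n^2(i)$ directly, with no regime distinction. Your Bernstein route with $t_{i,n}\propto p_n(i)^{3/2}$ reaches the same exponent, but---as you correctly flag---only once one checks the sub-Gaussian regime $t_{i,n}=o(p_n(i))$, i.e.\ $b_n\sigma_n p_n(i)/\sqrt n\to0$ uniformly in $i$; this is not listed among the theorem's hypotheses and would need a separate argument. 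The paper's Hoeffding choice thus buys a cleaner proof by removing precisely the obstacle you identify. (Your remark that the aggregate bound ``forces'' $t_{i,n}\propto p_n(i)^{3/2}$ is also slightly off: $t_{i,n}\propto p_n(i)$ already keeps $\sum_i t_{i,n}^2/p_n(i)$ bounded since $\sum_i p_n(i)=1$; the $3/2$ power is only needed because you pair it with Bernstein rather than Hoeffding.)
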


\begin{rem}\label{rem-3}
In this remark, we shall give the bound of the exponential moment in (\ref{4}). Since
$$
\aligned
&\ee\exp\left(\frac{\delta}{\sigma_n}\left|\sum_{i=1}^{K(n)}I_{\{X_{1, n}=i\}}\ln p_n(i)\right|\right)\\
=&\sum_{i=1}^{K(n)}\ee\left[\exp\left(\frac{\delta}{\sigma_n}\left|\sum_{i=1}^{K(n)}I_{\{X_{1, n}=i\}}\ln p_n(i)\right|\right)I_{\{X_{1, n}=i\}}\right]\\
=&\sum_{i=1}^{K(n)}\exp\left(\frac{\delta}{\sigma_n}\left|\ln p_n(i)\right|\right)p_n(i)
=\sum_{i=1}^{K(n)}(p_n(i))^{1-\frac{\delta}{\sigma_n}},
\endaligned
$$
then we have
$$
\aligned
&\sup_{n\ge 1}\ee\exp\left(\frac{\delta}{\sigma_n}\left|\sum_{i=1}^{K(n)}(I_{\{X_{1, n}=i\}}-p_n(i))\ln p_n(i)\right|\right)\\
\le &\sup_{n\ge 1}\ee\exp\left(\frac{\delta}{\sigma_n}\left|\sum_{i=1}^{K(n)}I_{\{X_{1, n}=i\}}\ln p_n(i)\right|\right)\exp\left(\frac{\delta}{\sigma_n}\left|\sum_{i=1}^{K(n)}p_n(i)\ln p_n(i)\right|\right)\\
\le & \sup_{n\ge 1}\left[\sum_{i=1}^{K(n)}(p_n(i))^{1-\frac{\delta}{\sigma_n}}\exp\left(\frac{\delta}{\sigma_n}H_n\right)\right].
\endaligned
$$
\end{rem}

\section{Examples}
In this section, we give several examples to show that the results in the section 2 hold.
\begin{exa}
For every $i=1,2, \cdots, K(n)$, let $p_n(i)=(C_ni)^{-1}$,
where
$$
C_n=\sum_{i=1}^{K(n)}\frac{1}{i}\sim \ln K(n),
$$
then we have
\beq\label{3.2}
\aligned
 \sigma_n^2=&\sum_{i=1}^{K(n)}p_{n}(i)\ln^2 p_{n}(i)-\left(\sum_{i=1}^{K(n)}p_{n}(i)\ln p_{n}(i)\right)^2\\
 =& \sum_{i=1}^{K(n)}\frac{1}{C_ni}\left(\ln C_{n}+\ln i\right)^2-\left(\sum_{i=1}^{K(n)}\frac{1}{C_ni}\left(\ln C_n+\ln i\right)\right)^2\\
 =& (\ln C_{n})^2 +2\frac{\ln C_n}{C_n}\sum_{i=1}^{K(n)}\frac{\ln i}{i}+ \frac{1}{C_n}\sum_{i=1}^{K(n)}\frac{(\ln i)^2}{i}-\left(\ln C_n+ \frac{1}{C_n}\sum_{i=1}^{K(n)}\frac{\ln i}{i}\right)^2\\
=&\frac{1}{C_n}\sum_{i=1}^{K(n)}\frac{(\ln i)^2}{i}-\frac{1}{C_n^2}\left(\sum_{i=1}^{K(n)}\frac{\ln i}{i}\right)^2  \\
\sim&  \frac{1}{3}(\ln K(n))^2-\frac{1}{4}(\ln K(n))^2=\frac{1}{12}(\ln K(n))^2.
\endaligned
\deq

\vskip 5pt

{\rm Conditions in Theorem \ref{thm2-1}}: We can take $K(n)\to \infty$ such that $K(n)=o(\sqrt{n}\ln K(n))$. For the sequence $K(n)$, it is easy to check
$$
\frac{\sqrt{n}\ln K(n)}{\ln n}\to \infty.
$$
Hence the condition (\ref{1}) holds.

\vskip 5pt

{\rm Conditions in Theorem \ref{thm2-2}}: We can check that
$$
\aligned
H_n=-\sum_{i=1}^{K(n)}p_{n}(i)\ln p_{n}(i)=&\sum_{i=1}^{K(n)}\frac{1}{C_ni}(\ln C_n+\ln i)\\
\sim&\ln C_n+\frac{1}{2}\ln K(n)\sim \frac{1}{2}\ln K(n)
\endaligned
$$
and for any $0\le \delta\le 1$,
$$
\aligned
\sum_{i=1}^{K(n)} p_n(i)\left|\ln p_n(i)\right|^{2+\delta}=&\sum_{i=1}^{K(n)}\frac{1}{C_ni}(\ln C_n+\ln i)^{2+\delta}\\
\le & C\sum_{i=1}^{K(n)}\frac{1}{C_ni}(\ln C_n)^{2+\delta}+C\sum_{i=1}^{K(n)}\frac{1}{C_ni}(\ln i)^{2+\delta}\\
\sim& C(\ln C_n)^{2+\delta}+\frac{C}{3+\delta}(\ln K(n))^{2+\delta}\\
\sim& \frac{C}{3+\delta}(\ln K(n))^{2+\delta}.
\endaligned
$$
From Remark \ref{rem-2}, we have
$$
\aligned
\ee|T_{1,n}|^{2+\delta}\le &C\sum_{i=1}^{K(n)} p_n(i)\left|\ln p_n(i)\right|^{2+\delta}+ C\left|H_n\right|^{2+\delta}\le C(\ln K(n))^{2+\delta}.
\endaligned
$$
Hence, we have
\beq
\sup_x\left|\pp\left(\frac{\sqrt{n}}{\sigma_n}(\hat H_n-H_n)\le x\right)-\Phi(x)\right|\le C\left(\frac{1}{n^{\delta/2}}+\sqrt{\frac{K(n)}{\sqrt{n}\ln K(n)}}\right)=:\Delta_n.
\deq
For the case $0<\delta<\frac{1}{2}$, if the sequence $K(n)$ satisfies
$$
\frac{K(n)}{\ln K(n)}< n^{\frac{1}{2}-\delta},
$$
then $\Delta_n\le Cn^{-\delta/2}$; if the sequence $K(n)$ satisfies
$$
n^{\frac{1}{2}-\delta}\le \frac{K(n)}{\ln K(n)}<\sqrt{n},
$$
then
$$
\Delta_n\le C\sqrt{\frac{K(n)}{\sqrt{n}\ln K(n)}}.
$$
For the case $\frac{1}{2}<\delta<1$, we can take any sequence $K(n)$ such that $K(n)=o(\sqrt{n}\ln K(n))$, then
$$
\Delta_n\le C\sqrt{\frac{K(n)}{\sqrt{n}\ln K(n)}}.
$$

\vskip 5pt

{\rm Conditions in Theorem \ref{thm2-3}}: We can check that
$$
\aligned
\sum_{i=1}^{K(n)}(p_n(i))^{1-\frac{\delta}{\sigma_n}}
= &
\left(\frac{1}{C_n}\right)^{1-\frac{\delta}{\sigma_n}}\sum_{i=1}^{K(n)}\left(\frac{1}{i}\right)^{1-\frac{\delta}{\sigma_n}}\\
\le & C\frac{\sigma_n}{\delta}\left(\frac{1}{C_n}\right)^{1-\frac{\delta}{\sigma_n}}(K(n))^{\frac{\delta}{\sigma_n}} \\
\le & C\left(K(n)\ln K(n)\right)^{\frac{\sqrt{12}\delta}{\ln K(n)}}\\
=&C\exp\left(\frac{\sqrt{12}\delta}{\ln K(n)}\left[\ln K(n)+\ln\ln K(n)\right]\right),
\endaligned
$$
which implies that
$$
\aligned
& \sup_{n\ge 1}\left[\sum_{i=1}^{K(n)}(p_n(i))^{1-\frac{\delta}{\sigma_n}}\exp\left(\frac{\delta}{\sigma_n}H_n\right)\right]\\
\le &C\sup_{n\ge 1}\exp\left(\frac{\sqrt{12}\delta}{\ln K(n)}\left[\ln K(n)+\ln\ln K(n)\right]+\sqrt{3}\delta\right)<\infty.
\endaligned
$$
Furthermore, we have
$$
\aligned
\sum_{i=1}^{K(n)}\exp\left(-2\varepsilon\sqrt{n}b_n\sigma_np_n^2(i)\right)
= &\sum_{i=1}^{K(n)}\exp\left(-\frac{2\varepsilon\sqrt{n}b_n}{\sqrt{12} i^2\ln K(n)}\right)\\
\le& K(n)\exp\left(-\frac{2\varepsilon\sqrt{n}b_n}{\sqrt{12} (K(n))^2\ln K(n)}\right).
\endaligned
$$
By taking the sequence $b_n$ such that
$$
\frac{1}{b_n^2}\log K(n)-\frac{2\varepsilon\sqrt{n}}{\sqrt{12}b_n (K(n))^2\ln K(n)}\to -\infty,
$$
then we can get
$$
\lim_{n\to\infty}\frac{1}{b_n^2}\log \left[\sum_{i=1}^{K(n)}\exp\left(-2\varepsilon\sqrt{n}b_n\sigma_np_n^2(i)\right)\right]=-\infty.
$$
For example, we take $K(n)=n^{\lambda}$ for some $0<\lambda<1/4$ and $b_n=n^{r}$ for $0<r<\frac{1}{2}-2\lambda$.
\end{exa}

\begin{exa}
For every $i=1,2, \cdots, K(n)$, let $p_n(i)=(C_ne^i)^{-1}$
where
$$
C_n=\sum_{i=1}^{K(n)}\frac{1}{e^i}=\frac{e^{-1}(1-e^{-K(n)})}{1-e^{-1}},
$$
then we have
\beq\label{3.2-1}
\aligned
 \sigma_n^2=&\sum_{i=1}^{K(n)}p_{n}(i)\ln^2 p_{n}(i)-\left(\sum_{i=1}^{K(n)}p_{n}(i)\ln p_{n}(i)\right)^2\\
 =& \sum_{i=1}^{K(n)}\frac{1}{C_ne^i}\left(\ln C_{n}+ i\right)^2-\left(\sum_{i=1}^{K(n)}\frac{1}{C_ne^i}\left(\ln C_n+ i\right)\right)^2\\
 =& (\ln C_{n})^2 +2\frac{\ln C_n}{C_n}\sum_{i=1}^{K(n)}\frac{ i}{e^i}+ \frac{1}{C_n}\sum_{i=1}^{K(n)}\frac{i^2}{e^i}-\left(\ln C_n+ \frac{1}{C_n}\sum_{i=1}^{K(n)}\frac{ i}{e^i}\right)^2\\
=&\frac{1}{C_n}\sum_{i=1}^{K(n)}\frac{ i^2}{e^i}-\frac{1}{C_n^2}\left(\sum_{i=1}^{K(n)}\frac{ i}{e^i}\right)^2.
\endaligned
\deq
Obviously, there is a positive constant $C$, such that $\sigma_n^2<C$. Next, we discuss the lower bound of $\sigma_n^2$.
For any positive integer $M\ge 2$, we have
$$
\aligned
\sum_{i=1}^{K(n)}\frac{i}{e^i}\le&\sum_{i=1}^{M}\frac{i}{e^i}+\int_M^{K(n)}xe^{-x}dx\\
=&\sum_{i=1}^{M}\frac{i}{e^i}+(M+1)e^{-M}-(K(n)+1)e^{-K(n)}
\endaligned
$$
and
$$
\aligned
\sum_{i=1}^{K(n)}\frac{i^2}{e^i}\ge&\sum_{i=1}^{M}\frac{i^2}{e^i}+\int_{M+1}^{K(n)+1}x^2e^{-x}dx\\
=&\sum_{i=1}^{M}\frac{i^2}{e^i}+[(M+1)^2+2(M+2)]e^{-(M+1)}\\
&\ \ \ \ \ \ \ \ \ \ \ \ \ \ \ -[(K(n)+1)^2+2(K(n)+2)]e^{-(K(n)+1)}.
\endaligned
$$
For any small $\varepsilon>0$ and for all $n$ large enough, we have
$$
\aligned
\frac{1}{C_n^2}\left(\sum_{i=1}^{K(n)}\frac{i}{e^i}\right)^2\le\frac{(e-1)^2(1+\varepsilon)^2}{(1-\varepsilon)^2}\left(\sum_{i=1}^{2}\frac{i}{e^i}+3e^{-2}\right)^2
\endaligned
$$
and
$$
\aligned
\frac{1}{C_n}\sum_{i=1}^{K(n)}\frac{i^2}{e^i}\ge (e-1)(1-\varepsilon)\left(\sum_{i=1}^{5}\frac{i^2}{e^i}+50e^{-6}\right).
\endaligned
$$
It is easy to check that
$$
\left(\sum_{i=1}^{5}\frac{i^2}{e^i}+50e^{-6}\right)>(e-1)\left(\sum_{i=1}^{2}\frac{i}{e^i}+3e^{-2}\right)^2,
$$
which implies that for any $\varepsilon>0$ small enough, we have
$$
(1-\varepsilon)\left(\sum_{i=1}^{5}\frac{i^2}{e^i}+50e^{-6}\right)>
\frac{(e-1)(1+\varepsilon)^2}{(1-\varepsilon)^2}\left(\sum_{i=1}^{2}\frac{i}{e^i}+3e^{-2}\right)^2.
$$
Hence, there exists a positive constant $c$ such that $\sigma_n^2>c$ for all $n$.
\vskip 5pt

{\rm Conditions in Theorem \ref{thm2-1}}: We can take $K(n)\to \infty$ such that $K(n)=o(\sqrt{n})$. For the sequence $K(n)$, it is easy to check
$$
\frac{\sqrt{n}}{\ln n}\to \infty.
$$
Hence the condition (\ref{1}) holds.

\vskip 5pt

{\rm Conditions in Theorem \ref{thm2-2}}: We can check that
$$
\aligned
H_n=-\sum_{i=1}^{K(n)}p_{n}(i)\ln p_{n}(i)=&\sum_{i=1}^{K(n)}\frac{1}{C_ne^i}(\ln C_n+i)<C
\endaligned
$$
and for any $0\le \delta\le 1$,
$$
\aligned
\sum_{i=1}^{K(n)} p_n(i)\left|\ln p_n(i)\right|^{2+\delta}=\sum_{i=1}^{K(n)}\frac{1}{C_ne^i}(\ln C_n+ i)^{2+\delta}
\le C\sum_{i=1}^{K(n)}\frac{i^{2+\delta}}{C_ne^i}<C.
\endaligned
$$
From Remark \ref{rem-2}, we have
$$
\aligned
\ee|T_{1,n}|^{2+\delta}\le &C\sum_{i=1}^{K(n)} p_n(i)\left|\ln p_n(i)\right|^{2+\delta}+ C\left|H_n\right|^{2+\delta}\le C.
\endaligned
$$
Hence, we have
\beq
\sup_x\left|\pp\left(\frac{\sqrt{n}}{\sigma_n}(\hat H_n-H_n)\le x\right)-\Phi(x)\right|\le C\left(\frac{1}{n^{\delta/2}}+\sqrt{\frac{K(n)}{\sqrt{n}}}\right)=:\Delta_n.
\deq
For the case $0<\delta<\frac{1}{2}$, if the sequence $K(n)$ satisfies
$$
 K(n)<  n^{\frac{1}{2}-\delta},
$$
then $\Delta_n\le Cn^{-\delta/2}$; if the sequence $K(n)$ satisfies
$$
n^{\frac{1}{2}-\delta}\le K(n)<\sqrt{n},
$$
then
$$
\Delta_n\le C\sqrt{\frac{K(n)}{\sqrt{n}}}.
$$
For the case $\frac{1}{2}<\delta<1$, we can take any sequence $K(n)$ such that $K(n)=o(\sqrt{n})$, then
$$
\Delta_n\le C\sqrt{\frac{K(n)}{\sqrt{n}}}.
$$

\vskip 5pt

{\rm Conditions in Theorem \ref{thm2-3}}: We can take $\delta>0$ small enough such that $\delta/\sigma_n<1$, then it follows that
$$
\aligned
\sum_{i=1}^{K(n)}(p_n(i))^{1-\frac{\delta}{\sigma_n}}
=
\left(\frac{1}{C_n}\right)^{1-\frac{\delta}{\sigma_n}}\sum_{i=1}^{K(n)}\left(\frac{1}{e^i}\right)^{1-\frac{\delta}{\sigma_n}}
\le  C,
\endaligned
$$
which implies that
$$
\aligned
 \sup_{n\ge 1}\left[\sum_{i=1}^{K(n)}(p_n(i))^{1-\frac{\delta}{\sigma_n}}\exp\left(\frac{\delta}{\sigma_n}H_n\right)\right]
<\infty.
\endaligned
$$
Furthermore, we have
$$
\aligned
\sum_{i=1}^{K(n)}\exp\left(-2\varepsilon\sqrt{n}b_n\sigma_np_n^2(i)\right)
\le \sum_{i=1}^{K(n)}\exp\left(-\frac{2\varepsilon c\sqrt{n}b_n}{C_n e^i}\right)
\le  K(n)\exp\left(-\frac{2\varepsilon C\sqrt{n}b_n}{e^{K(n)}}\right).
\endaligned
$$
By taking the sequence $b_n$ such that
$$
\frac{1}{b_n^2}\log K(n)-\frac{2\varepsilon\sqrt{n}}{b_n e^{K(n)}}\to -\infty,
$$
then we can get
$$
\lim_{n\to\infty}\frac{1}{b_n^2}\log \left[\sum_{i=1}^{K(n)}\exp\left(-2\varepsilon\sqrt{n}b_n\sigma_np_n^2(i)\right)\right]=-\infty.
$$
For example, we take $K(n)=(\ln n)^{\lambda}$ for some $0<\lambda<\frac{1}{2}$ and $b_n=n^{r}$ for $0<r<\frac{1}{2}-\lambda$.
\end{exa}

\begin{exa}
For every $i=2, \cdots, K(n)$ let $p_n(i)=(C_ni\ln i)^{-1}$,
where
$$
C_n=\sum_{i=2}^{K(n)}\frac{1}{i\ln i}\sim \ln\ln K(n),
$$
then we have
\beq\label{3.2}
\aligned
 \sigma_n^2=&\sum_{i=1}^{K(n)}p_{n}(i)\ln^2 p_{n}(i)-\left(\sum_{i=1}^{K(n)}p_{n}(i)\ln p_{n}(i)\right)^2\\
 =& \sum_{i=1}^{K(n)}\frac{1}{C_ni\ln i}\left(\ln C_{n}+\ln(i\ln i)\right)^2-\left(\sum_{i=1}^{K(n)}\frac{1}{C_ni\ln i}\left(\ln C_n+\ln(i\ln i)\right)\right)^2\\
 =& (\ln C_{n})^2 +2\frac{\ln C_n}{C_n}\sum_{i=1}^{K(n)}\frac{\ln(i\ln i)}{i\ln i}+ \frac{1}{C_n}\sum_{i=1}^{K(n)}\frac{(\ln (i\ln i))^2}{i\ln i}\\
 &\ \ \ \ \ \ \ \ \ \ \ \ \ \ \ \ \ -\left(\ln C_n+ \frac{1}{C_n}\sum_{i=1}^{K(n)}\frac{\ln (i\ln i)}{i\ln i}\right)^2\\
=&\frac{1}{C_n}\sum_{i=1}^{K(n)}\frac{(\ln (i\ln i))^2}{i\ln i}-\frac{1}{C_n^2}\left(\sum_{i=1}^{K(n)}\frac{\ln (i\ln i)}{i\ln i}\right)^2  \\
\sim&  \frac{1}{2}\frac{(\ln K(n))^2}{\ln\ln K(n)}-\frac{(\ln K(n))^2}{(\ln\ln K(n))^2}\sim \frac{1}{2}\frac{(\ln K(n))^2}{\ln\ln K(n)}.
\endaligned
\deq

\vskip 5pt

{\rm Conditions in Theorem \ref{thm2-1}}: We can take $K(n)\to \infty$ such that
$$
K(n)=o\left(\sqrt{n}\frac{\ln K(n)}{\sqrt{\ln\ln K(n)}}\right).
$$ For the sequence $K(n)$, it is easy to check
$$
\frac{\sqrt{n}\ln K(n)}{\ln n\sqrt{\ln\ln K(n)}}\to\infty.
$$
Hence the condition (\ref{1}) holds.

\vskip 5pt

{\rm Conditions in Theorem \ref{thm2-2}}: We can check that
$$
\aligned
H_n=-\sum_{i=1}^{K(n)}p_{n}(i)\ln p_{n}(i)=&\sum_{i=1}^{K(n)}\frac{1}{C_ni\ln i}(\ln C_n+\ln(i\ln i))\\
\sim&\ln C_n+\frac{\ln K(n)}{\ln\ln K(n)}\sim\frac{\ln K(n)}{\ln\ln K(n)}
\endaligned
$$
and for any $0\le \delta\le 1$,
$$
\aligned
\sum_{i=1}^{K(n)} p_n(i)\left|\ln p_n(i)\right|^{2+\delta}=&\sum_{i=1}^{K(n)}\frac{1}{C_ni\ln i}(\ln C_n+\ln(i\ln i))^{2+\delta}\\
\le & C\sum_{i=1}^{K(n)}\frac{(\ln C_n)^{2+\delta}}{C_ni\ln i}+C\sum_{i=1}^{K(n)}\frac{(\ln( i\ln i))^{2+\delta}}{C_ni\ln i}\\
\sim& C(\ln C_n)^{2+\delta}+\frac{C}{(2+\delta)C_n}(\ln K(n))^{2+\delta}\\
\sim& \frac{C}{(2+\delta)}\frac{(\ln K(n))^{2+\delta}}{\ln\ln K(n)}.
\endaligned
$$
From Remark \ref{rem-2}, we have
$$
\aligned
\ee|T_{1,n}|^{2+\delta}\le &C\sum_{i=1}^{K(n)} p_n(i)\left|\ln p_n(i)\right|^{2+\delta}+ C\left|H_n\right|^{2+\delta}\le C\frac{(\ln K(n))^{2+\delta}}{\ln\ln K(n)}.
\endaligned
$$
Hence, we have
\beq
\aligned
\sup_x&\left|\pp\left(\frac{\sqrt{n}}{\sigma_n}(\hat H_n-H_n)\le x\right)-\Phi(x)\right|\\
&\le C\left(\frac{(\ln\ln K(n))^{\delta/2}}{n^{\delta/2}}+\sqrt{\frac{K(n)\sqrt{\ln\ln K(n)}}{\sqrt{n}\ln K(n)}}\right)=:\Delta_n.
\endaligned
\deq
For the case $0<\delta<\frac{1}{2}$, if the sequence $K(n)$ satisfies
$$
\frac{K(n)}{\ln K(n)}< \left(\frac{n}{\ln\ln K(n)}\right)^{\frac{1}{2}-\delta},
$$
then
$$
\Delta_n\le C\frac{(\ln\ln K(n))^{\delta/2}}{n^{\delta/2}};
$$
 if the sequence $K(n)$ satisfies
$$
\left(\frac{n}{\ln\ln K(n)}\right)^{\frac{1}{2}-\delta}\le \frac{K(n)}{\ln K(n)}<\left(\frac{n}{\ln\ln K(n)}\right)^{\frac{1}{2}},
$$
then
$$
\Delta_n\le C\sqrt{\frac{K(n)}{\sqrt{n}\ln K(n)}}.
$$
For the case $\frac{1}{2}\le\delta<1$, we can take any sequence $K(n)$ such that
$$
K(n)=o\left(\frac{\sqrt{n}\ln K(n)}{\sqrt{\ln\ln K(n)}}\right),
$$ then
$$
\Delta_n\le C\sqrt{\frac{K(n)\sqrt{\ln\ln K(n)}}{\sqrt{n}\ln K(n)}}.
$$

\vskip 5pt

{\rm Here we need to remark that the Conditions in Theorem \ref{thm2-3} can not be satisfied for this example}.
\end{exa}

\section{Proofs of main results}
We state some useful lemmas to prove these main results.
\begin{lem}\label{lem1}\cite{C-S}
 Let $X_1, X_2, \cdots, X_n$ be independent and not necessarily identically distributed random variables with zero means and finite variances. Define $W=\sum_{k=1}^nX_k$ and assume that $Var(W)=1$. Let $F$ be the distribution function of $W$ and $\Phi$ the standard normal distribution function. Then there exists an absolute constant $C$ such that for every real number $x$,
 $$
 |F(x)-\Phi(x)|\le C\sum_{i=1}^n\left\{\frac{\ee X_i^2I(|X_i|>1+|x|)}{(1+|x|)^2}+
 \frac{\ee |X_i|^3I(|X_i|\le 1+|x|)}{(1+|x|)^3}\right\}.
 $$
 Furthermore, we have
 $$
 \sup_{x}|F(x)-\Phi(x)|\le C\sum_{i=1}^n\left\{\ee X_i^2I(|X_i|>1)+
 \ee |X_i|^3I(|X_i|\le 1)\right\}.
 $$
\end{lem}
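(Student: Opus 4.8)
The plan is to prove this by Stein's method, following Chen and Shao. The two displayed inequalities are the non-uniform and the uniform Berry--Esseen bounds, and the second will follow from the first by an elementary truncation estimate, so the real work is the non-uniform bound.

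First I would fix $x\in\rr$ and introduce the Stein equation for the indicator test function $h_x=I(\,\cdot\le x)$: let $f_x$ solve
$$
f_x'(w)-wf_x(w)=I(w\le x)-\Phi(x),
$$
whose explicit solution is $f_x(w)=\sqrt{2\pi}\,e^{w^2/2}\Phi(w)\,(1-\Phi(x))$ for $w\le x$ and the symmetric expression $f_x(w)=\sqrt{2\pi}\,e^{w^2/2}\Phi(x)\,(1-\Phi(w))$ for $w>x$. Evaluating at $w=W$ and taking expectations turns the target into
$$
F(x)-\Phi(x)=\ee\big[f_x'(W)-Wf_x(W)\big],
$$
so everything reduces to estimating the right-hand side. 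At this point I would record the quantitative bounds on the Stein solution that drive the estimate: $0<f_x(w)\le C$, $|f_x'(w)|\le 1$, and $(1+|w|)\,|f_x(w)|\le C$, together with the sharper decay of $f_x$ and $f_x'$ when $w$ lies on the far side of $x$, which is what eventually produces the $(1+|x|)^{-2}$ and $(1+|x|)^{-3}$ factors.

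Next I would exploit independence through a leave-one-out decomposition. Writing $W^{(i)}=W-X_i$, which is independent of $X_i$, and using $\ee X_i=0$,
$$
\ee[Wf_x(W)]=\sum_{i=1}^n\ee\big[X_i(f_x(W)-f_x(W^{(i)}))\big]=\sum_{i=1}^n\int_{-\infty}^{\infty}\ee\big[f_x'(W^{(i)}+t)\big]\,K_i(t)\,dt,
$$
where $K_i(t)=\ee\big[X_i(I(0\le t\le X_i)-I(X_i\le t<0))\big]\ge 0$ is the Stein kernel of $X_i$ and satisfies $\int_{\rr}K_i(t)\,dt=\ee X_i^2$, hence $\sum_i\int_{\rr}K_i(t)\,dt=Var(W)=1$. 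Combining this with $\ee f_x'(W)=\sum_i\int_{\rr}\ee[f_x'(W)]\,K_i(t)\,dt$ gives the central identity
$$
\ee\big[f_x'(W)-Wf_x(W)\big]=\sum_{i=1}^n\int_{-\infty}^{\infty}\ee\big[f_x'(W)-f_x'(W^{(i)}+t)\big]\,K_i(t)\,dt.
$$

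The remaining task is to bound the integrand. Here I would split $f_x'$ via $f_x'(w)=wf_x(w)+I(w\le x)-\Phi(x)$ into a smooth part and a jump part. The smooth part $wf_x(w)-(W^{(i)}+t)f_x(W^{(i)}+t)$ is handled by the Lipschitz-type bound on $w\mapsto wf_x(w)$; since $K_i$ is supported on $t$ between $0$ and $X_i$, the increment of the argument is at most $|X_i|$, and its contribution integrates to a term controlled by $\ee|X_i|^3$. The jump part $\ee\big|I(W\le x)-I(W^{(i)}+t\le x)\big|$ is a probability that $W^{(i)}+t$ lies near $x$, controlled by a concentration (small-ball) estimate for $W^{(i)}$, contributing a term controlled by $\ee X_i^2$. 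Truncating each $X_i$ at level $1+|x|$ then separates the two summands $\ee X_i^2 I(|X_i|>1+|x|)$ and $\ee|X_i|^3 I(|X_i|\le 1+|x|)$, and the non-uniform weights $(1+|x|)^{-2}$, $(1+|x|)^{-3}$ emerge from the refined decay of $f_x$ away from $x$ combined with the concentration bound. \emph{This is the main obstacle}: propagating the $x$-dependence correctly, in particular establishing the concentration inequality uniformly in $x$ and treating the region where $W$ and $x$ are both large of the same sign, which requires a recursive/conjugating argument rather than the crude bounds above. Finally, for the uniform bound I would set $a=1+|x|\ge 1$ and check termwise that, since $I(|X_i|>a)\le I(|X_i|>1)$ and $|X_i|^3\le a|X_i|^2$ on $\{1<|X_i|\le a\}$, each summand of the non-uniform bound is at most $C\big(\ee X_i^2 I(|X_i|>1)+\ee|X_i|^3 I(|X_i|\le 1)\big)$ uniformly in $x$, whence $\sup_x|F(x)-\Phi(x)|$ obeys the stated estimate.
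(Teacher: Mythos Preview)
The paper does not prove this lemma at all: it is stated with the citation \cite{C-S} and used as a black box. There is therefore nothing in the paper to compare your argument against; the authors simply import the Chen--Shao non-uniform Berry--Esseen bound as a known tool.

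Your sketch is indeed an outline of the Chen--Shao Stein's-method proof from the cited reference, and the ingredients you list (Stein equation for the indicator, leave-one-out identity with the kernels $K_i$, split of $f_x'$ into the smooth piece $wf_x(w)$ and the indicator jump, truncation at level $1+|x|$) are the correct ones. You are also right that the genuinely hard step is pushing the $(1+|x|)^{-2}$ and $(1+|x|)^{-3}$ decay through the estimate; in Chen--Shao this is handled by an exponential-tilting/conjugate-measure argument together with a concentration inequality for $W^{(i)}$, not by the bare Lipschitz bounds you mention first. As written, your proposal is a faithful roadmap rather than a complete proof, and you flag this yourself at the ``main obstacle'' line; filling that in would essentially mean reproducing Sections~2--3 of the cited paper. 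For the purposes of the present paper, a citation is all that is required.
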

\begin{lem}\label{lem5} $($\cite[Lemma 2]{C-R}$)$
For any random variables $X$, $Y$, real $x$ and constant $a>0$,
$$
\sup_{x}\big|\pp(X+Y\leq x)-\Phi(x)\big|\leq\sup_{x}\big|\pp(X\leq x)-\Phi(x)\big|+\frac{a}{\sqrt{2\pi}}+\pp\left(|Y|>a\right),
$$
where $\Phi(x)$ is the standard normal distribution.
\end{lem}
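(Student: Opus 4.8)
The plan is to prove this inequality by a standard truncation of the perturbation $Y$ at the level $a$. On the event $\{|Y|>a\}$ one discards the contribution, paying the price $\pp(|Y|>a)$; on the complementary event $\{|Y|\le a\}$ one replaces the random shift $Y$ by the deterministic shift $\pm a$, which turns $\{X+Y\le x\}$ into an event of the form $\{X\le x\pm a\}$ to which the hypothesis $\Delta:=\sup_{x}|\pp(X\le x)-\Phi(x)|$ applies directly; the cost of moving the argument of $\Phi$ by $a$ is then absorbed by the elementary Lipschitz bound $0\le\Phi(x+a)-\Phi(x)\le a/\sqrt{2\pi}$, which holds because the standard normal density is everywhere at most $1/\sqrt{2\pi}$. (This is exactly the mechanism that later lets one transfer a Berry--Esseen bound for $X$ to one for $X+Y$ upon choosing $a$ to balance $a/\sqrt{2\pi}$ against $\pp(|Y|>a)$.)

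Concretely, I would first establish the upper bound. Fix $x$. Since $\{X+Y\le x\}\cap\{|Y|\le a\}\subseteq\{X\le x+a\}$ while $\{X+Y\le x\}\cap\{|Y|>a\}\subseteq\{|Y|>a\}$, one obtains
$$
\pp(X+Y\le x)\le\pp(X\le x+a)+\pp(|Y|>a)\le\Phi(x+a)+\Delta+\pp(|Y|>a)\le\Phi(x)+\frac{a}{\sqrt{2\pi}}+\Delta+\pp(|Y|>a).
$$
For the matching lower bound I would use $\{X\le x-a\}\cap\{|Y|\le a\}\subseteq\{X+Y\le x\}$ together with $\pp(A\cap B)\ge\pp(A)-\pp(B^{c})$, which gives
$$
\pp(X+Y\le x)\ge\pp(X\le x-a)-\pp(|Y|>a)\ge\Phi(x-a)-\Delta-\pp(|Y|>a)\ge\Phi(x)-\frac{a}{\sqrt{2\pi}}-\Delta-\pp(|Y|>a).
$$
Combining the two displays yields $|\pp(X+Y\le x)-\Phi(x)|\le\Delta+a/\sqrt{2\pi}+\pp(|Y|>a)$ for every $x$, and taking the supremum over $x\in\rr$ completes the proof.

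I do not anticipate any genuine obstacle: the whole argument reduces to two elementary set inclusions plus the Lipschitz estimate for $\Phi$. The only points requiring a little care are to choose the shift direction correctly ($+a$ for the upper bound and $-a$ for the lower bound) and, in the lower bound, to use the inclusion--exclusion estimate $\pp(A\cap B)\ge\pp(A)-\pp(B^{c})$ rather than a cruder splitting when reducing to $\pp(X\le x-a)$ alone.
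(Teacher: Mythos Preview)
Your argument is correct and is the standard proof of this classical ``smoothing'' inequality. Note, however, that the paper does not actually supply its own proof of this lemma: it is quoted verbatim from Chang and Rao \cite[Lemma~2]{C-R} and used as a black box in the proof of Theorem~\ref{thm2-2}, so there is nothing in the paper to compare against beyond observing that your truncation-plus-Lipschitz argument is exactly the one underlying the cited result.
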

\begin{lem}\label{H}\cite{Hoeffding} Let $X_1, X_2, \cdots, X_n$ be independent random variables with $\ee X_i=0$ and $a_i\le X_i\le b_i$ for any $1\le i\le n$, where $a_1, b_1, a_2, b_2, \cdots, a_n, b_n$ are constants with $a_i<b_i$ for every $1\le i\le n$. Then for any $t>0$, we have
$$
\ee\exp\left(t\sum_{i=1}^n X_i\right) \le \exp\left(\frac{1}{8}t^2\sum_{i=1}^n(b_i-a_i)^2\right).
$$
In particular, for any $r>0$, we have
$$
\pp\left(\left|\sum_{i=1}^n X_i\right|>rn\right) \le 2\exp\left(-\frac{2 n^2r^2}{\sum_{i=1}^n(b_i-a_i)^2}\right).
$$
\end{lem}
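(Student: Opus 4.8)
The plan is to establish the moment generating function bound first and then derive the tail bound from it by a standard Chernoff argument. Since $X_1,\dots,X_n$ are independent, the exponential moment factorizes as
$$
\ee\exp\left(t\sum_{i=1}^n X_i\right)=\prod_{i=1}^n\ee\exp(tX_i),
$$
so the whole problem reduces to a sharp single-variable estimate: for a mean-zero random variable $X$ with $a\le X\le b$ and any $t>0$, one must show $\ee\exp(tX)\le\exp\left(\frac{1}{8}t^2(b-a)^2\right)$. Multiplying these per-coordinate bounds together and collecting the exponents immediately yields the first displayed inequality.

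The main work, and the main obstacle, is the single-variable bound. Here I would exploit the convexity of $x\mapsto e^{tx}$: for every $x\in[a,b]$,
$$
e^{tx}\le\frac{b-x}{b-a}e^{ta}+\frac{x-a}{b-a}e^{tb}.
$$
Taking expectations and using $\ee X=0$ gives
$$
\ee e^{tX}\le\frac{b}{b-a}e^{ta}-\frac{a}{b-a}e^{tb}=:e^{\phi(u)},
$$
where $u=t(b-a)$ and $p=-a/(b-a)$; note that $\ee X=0$ forces $a\le0\le b$, so $p\in[0,1]$ and $\phi(u)=-pu+\ln\left(1-p+pe^{u}\right)$ is well defined. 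One then checks $\phi(0)=0$ and $\phi'(0)=0$, and, writing $q(u)=pe^u/(1-p+pe^u)\in[0,1]$, that $\phi''(u)=q(u)(1-q(u))\le\frac14$. By Taylor's theorem with remainder, $\phi(u)\le\frac18 u^2=\frac18 t^2(b-a)^2$, which is exactly the desired per-coordinate estimate. The delicate point is verifying the uniform bound $\phi''\le1/4$, since the sharp constant $1/8$ downstream hinges on it.

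Finally, for the tail bound I would apply the exponential Markov inequality. For any $t>0$,
$$
\pp\left(\sum_{i=1}^n X_i>rn\right)\le e^{-trn}\,\ee\exp\left(t\sum_{i=1}^n X_i\right)\le\exp\left(-trn+\frac18 t^2\sum_{i=1}^n(b_i-a_i)^2\right).
$$
Optimizing the right-hand side over $t$ by choosing $t=4rn/\sum_{i=1}^n(b_i-a_i)^2$ gives the bound $\exp\left(-2r^2n^2/\sum_{i=1}^n(b_i-a_i)^2\right)$. Applying the same argument to $-X_1,\dots,-X_n$ controls the lower tail $\pp\left(\sum_{i=1}^n X_i<-rn\right)$ by the identical quantity, and a union bound over the two events produces the factor $2$ in the stated inequality.
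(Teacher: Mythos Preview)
Your argument is correct and is the standard proof: the convexity bound on $e^{tx}$ followed by the Taylor analysis of $\phi(u)=-pu+\ln(1-p+pe^{u})$ with $\phi''\le 1/4$ yields Hoeffding's lemma, independence gives the factorized MGF bound, and the Chernoff step with the optimal $t=4rn/\sum_{i=1}^n(b_i-a_i)^2$ produces the two-sided tail estimate. The paper itself does not supply a proof of this lemma at all; it simply quotes the result from Hoeffding's original 1963 paper, so there is nothing to compare your approach against beyond noting that what you wrote is precisely the classical derivation found in that reference.
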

\begin{proof} [{\bf Proof of Theorem \ref{thm2-1}}]
From the definition of the plug-in estimator $\hat H_n$ for the entropy $H_n$, we have
\beq\label{clt1-p1}
\aligned
\hat H_n- H_n=&-\sum_{i=1}^{K(n)}(\hat p_n(i)-p_n(i))\ln p_n(i) -\sum_{i=1}^{K(n)}\hat p_n(i)\ln \frac{\hat p_n(i)}{p_n(i)}.
\endaligned
\deq
For every $n\ge 1$ and $1\le k\le n$, let us define
\beq\label{clt1-p1t}
T_{k,n}:=-\sum_{i=1}^{K(n)}(I_{\{X_{k, n}=i\}}-p_n(i))\ln p_n(i),
\deq
then we have
\beq\label{clt1-p2}
\aligned
\sum_{i=1}^{K(n)}(\hat p_n(i)-p_n(i))\ln p_n(i)=\frac{1}{n}\sum_{k=1}^nT_{k,n}.
\endaligned
\deq
It is easy to check that $\{T_{k,n}, 1\le k\le n, n\ge 1\}$ is an array of independent identically distributed random variables with $\ee T_{1,n}=0$ and
\begin{align*}
Var(T_{1,n})=&\ee\left(\sum_{i=1}^{K(n)}(1_{\{X_{1, n}=i\}}-p_n(i))\ln p_n(i)\right)^2\\
=&\sum_{i=1}^{K(n)}p_{n}(i)\ln^2 p_{n}(i)-\left(\sum_{i=1}^{K(n)}p_{n}(i)\ln p_{n}(i)\right)^2\\
=& Var(\ln p_n(X_{1,n}))=\sigma^2_n.
\end{align*}
From the inequality $(a+b)^2\le 2(a^2+b^2)$, we have
\begin{align*}
\frac{1}{n\sigma^2_n}&\sum_{k=1}^n\ee T_{k,n}^21_{\{|T_{k,n}|>\varepsilon\sqrt{n}\sigma_n\}}
=\frac{1}{\sigma^2_n}\ee T_{1,n}^21_{\{|T_{1,n}|>\varepsilon\sqrt{n}\sigma_n\}}\\
\le& \frac{2}{\sigma^2_n}\ee\left(\sum_{i=1}^{K(n)}1_{\{X_{1, n}=i\}}\ln p_n(i)\right)^2 1_{\{|T_{1,n}|>\varepsilon\sqrt{n}\sigma_n\}}
 +\frac{2H_n^2}{\sigma^2_n}\pp\left(|T_{1,n}|>\varepsilon\sqrt{n}\sigma_n\right).
\end{align*}
Now we rewrite the event $\left\{|T_{1,n}|>\varepsilon\sqrt{n}\sigma_n\right\}$ as follows
\begin{align*}
\left\{|T_{1,n}|>\varepsilon\sqrt{n}\sigma_n\right\}=&\left\{\left|\sum_{i=1}^{K(n)}I_{\{X_{1, n}=i\}}\ln p_n(i)+H_n\right|>\varepsilon\sqrt{n}\sigma_n\right\}\\
=&\left\{\sum_{i=1}^{K(n)}I_{\{X_{1, n}=i\}}\ln p_n(i)>-H_n+\varepsilon\sqrt{n}\sigma_n\right\}\\
&\ \ \ \bigcup\left\{\sum_{i=1}^{K(n)}I_{\{X_{1, n}=i\}}\ln p_n(i)<-H_n-\varepsilon\sqrt{n}\sigma_n\right\}.
\end{align*}
By using Jensen's inequality, we have
$$
H_n=-\sum_{i=1}^{K(n)}p_{n}(i)\ln p_{n}(i)\le \ln K(n),
$$
which, together with the condition (\ref{1}), implies
$$
-H_n+\sqrt{n}\sigma_n\to +\infty.
$$
Hence, it is easy to check that the number of terms in $\{p_n(i): p_n(i)>e^{-H_n+\varepsilon\sqrt{n}\sigma_n}\}$ is less than or equal to
$e^{H_n-\varepsilon\sqrt{n}\sigma_n}$, which, implies that for all $n$ large enough, $H_n-\varepsilon\sqrt{n}\sigma_n<0$,\ \
$\sharp\{p_n(i): p_n(i)>e^{-H_n+\varepsilon\sqrt{n}\sigma_n}\}=0$ and
$$
\sum_{\{p_n(i): p_n(i)>e^{-H_n+\varepsilon\sqrt{n}\sigma_n}\}}p_n(i)(\ln p_n(i))^2=0.
$$
Moreover, we have
\begin{align*}
\frac{1}{\sigma^2_n}&\sum_{\{p_n(i): p_n(i)<e^{-H_n-\varepsilon\sqrt{n}\sigma_n}\}}p_n(i)(\ln p_n(i))^2\\
\le &\frac{K(n)}{\sigma^2_n}e^{-H_n-\varepsilon\sqrt{n}\sigma_n}(H_n+\varepsilon\sqrt{n}\sigma_n)^2\\
\le
&C\frac{K(n)}{\sigma^2_n}e^{-\varepsilon\sqrt{n}\sigma_n}(\varepsilon\sqrt{n}\sigma_n)^2
\le CK(n)ne^{-\sqrt{n}\sigma_n}\\
=&C\exp\left\{\ln n+\ln K(n)-\sqrt{n}\sigma_n\right\}
\to 0.
\end{align*}
Hence we have
\begin{align*}
& \frac{2}{\sigma^2_n}\ee\left(\sum_{i=1}^{K(n)}1_{\{X_{1, n}=i\}}\ln p_n(i)\right)^2 1_{\{|T_{1,n}|>\varepsilon\sqrt{n}\sigma_n\}}\\
= & \frac{2}{\sigma^2_n}\sum_{\{p_n(i):|\ln p_n(i)+H_n|>\varepsilon\sqrt{n}\sigma_n\}}p_n(i)(\ln p_n(i))^2\\
\le & \frac{2}{\sigma^2_n}\sum_{\{p_n(i): p_n(i)>e^{-H_n+\varepsilon\sqrt{n}\sigma_n}\}}p_n(i)(\ln p_n(i))^2\\
&\ \ \ \ \ +\frac{2}{\sigma^2_n}\sum_{\{p_n(i): p_n(i)<e^{-H_n-\varepsilon\sqrt{n}\sigma_n}\}}p_n(i)(\ln p_n(i))^2\\
\to & 0.
\end{align*}
Furthermore, we get
$$
\frac{2H_n^2}{\sigma^2_n}\pp\left(|T_{1,n}|>\varepsilon\sqrt{n}\sigma_n\right)\le \frac{2H_n^2}{\varepsilon^2n\sigma_n^2}\le \frac{2(\ln K(n))^2}{\varepsilon^2n\sigma_n^2}\to 0.
$$
So, by using Lindeberg central limit theorem, we have
$$
\frac{\sqrt{n}}{\sigma_n}\sum_{i=1}^{K(n)}(\hat p_n(i)-p_n(i))\ln p_n(i)\xrightarrow{\dd}N(0,1).
$$
Next it is enough to prove
\beq\label{thm1-101}
\frac{\sqrt{n}}{\sigma_n}\sum_{i=1}^{K(n)}\hat p_n(i)\ln \frac{\hat p_n(i)}{p_n(i)}\xrightarrow{\pp}0.
\deq
From the following inequality
$$
\frac{h}{1+h}<\ln (1+h)<h \ \ \ \text{for}\ \ h>-1,
$$
we have
\beq\label{thm1-102}
\sum_{i=1}^{K(n)}\hat p_n(i)\ln \frac{\hat p_n(i)}{p_n(i)}\le \sum_{i=1}^{K(n)}\hat p_n(i)\left(\frac{\hat p_n(i)}{p_n(i)}-1\right)=\sum_{i=1}^{K(n)}\frac{\left(\hat p_n(i)-p_n(i)\right)^2}{p_n(i)}
\deq
and
\beq\label{thm1-103}
\aligned
\sum_{i=1}^{K(n)}\hat p_n(i)\ln \frac{\hat p_n(i)}{p_n(i)}\ge \sum_{i=1}^{K(n)}\frac{\hat p_n(i)\left(\frac{\hat p_n(i)}{p_n(i)}-1\right)}{1+\left(\frac{\hat p_n(i)}{p_n(i)}-1\right)}
=\sum_{i=1}^{K(n)} p_n(i)\left(\frac{\hat p_n(i)}{p_n(i)}-1\right)=0.
\endaligned
\deq
So we can get
\begin{align*}
&\frac{\sqrt{n}}{\sigma_n}\sum_{i=1}^{K(n)}\ee \left[\frac{\left(\hat p_n(i)-p_n(i)\right)^2}{p_n(i)}\right]\\
=&\frac{1}{n^{3/2}\sigma_n}\sum_{i=1}^{K(n)} \frac{1}{p_n(i)}\ee\left(\sum_{j=1}^n(I_{\{X_{j,n}=i\}} -p_n(i))\right)^2\\
=& \frac{1}{\sqrt{n}\sigma_n}\sum_{i=1}^{K(n)}(1-p_n(i))= \frac{K(n)-1}{\sqrt{n}\sigma_n}\to 0,
\end{align*}
which implies
\beq\label{thm1-11}
\frac{\sqrt{n}}{\sigma_n}\sum_{i=1}^{K(n)}\hat p_n(i)\left(\frac{\hat p_n(i)}{p_n(i)}-1\right)\xrightarrow{\pp}0.
\deq
From (\ref{thm1-102}),  (\ref{thm1-103}) and (\ref{thm1-11}), we get the claim (\ref{thm1-101}).
\end{proof}
\begin{proof} [{\bf Proof of Theorem \ref{thm2-2}}]
From (\ref{clt1-p1}), (\ref{clt1-p2}) and Lemma \ref{lem5}, for any $a>0$, we have
\begin{align}\label{2201}
&\sup\limits_x\left|\pp\left(\frac{\sqrt{n}}{\sigma_n}(\hat{H}_n-H)<x\right)-\Phi(x)\right|\nonumber\\
\le &\sup\limits_x\left|\pp\left(\frac{1}{\sqrt{n}\sigma_n}\sum_{k=1}^nT_{k,n}<x\right)-\Phi(x)\right|
+\frac{a}{\sqrt{2\pi}}\nonumber\\
&\ \ \ +\pp\left(\frac{\sqrt{n}}{\sigma_n}\left|\sum_{i=1}^{K(n)}\hat p_n(i)\ln {\frac{\hat p_n(i)}{p_n(i)}}\right|>a\right).
\end{align}
Firstly, by Lemma \ref{lem1}, for any $0\le \delta\le 1$, we have
\begin{align}\label{2202}
&\sup\limits_x\left|\pp\left(\frac{1}{\sqrt{n}\sigma_n}\sum_{k=1}^nT_{k,n}<x\right)-\Phi(x)\right|\nonumber\\
\le &C\sum_{k=1}^n\left\{\ee \left(\frac{T_{k,n}}{\sqrt{n}\sigma_n}\right)^2I(|T_{k,n}|>\sqrt{n}\sigma_n)+
 \ee\left(\frac{T_{k,n}}{\sqrt{n}\sigma_n}\right)^3I(|T_{k,n}|\le\sqrt{n}\sigma_n)\right\}\nonumber\\
 \le & C\sum_{k=1}^n\left\{\ee \left|\frac{T_{k,n}}{\sqrt{n}\sigma_n}\right|^{2+\delta}I(|T_{k,n}|>\sqrt{n}\sigma_n)+
 \ee\left|\frac{T_{k,n}}{\sqrt{n}\sigma_n}\right|^{2+\delta}I(|T_{k,n}|\le\sqrt{n}\sigma_n)\right\}\nonumber\\
 \le & \frac{C}{n^{\delta/2}\sigma_n^{2+\delta}}\ee|T_{1,n}|^{2+\delta}.
\end{align}
Obviously, for any $n$ and any $0\le \delta\le 1$, we have
\beq\label{2}
\ee|T_{1,n}|^{2+\delta}=\ee\left|\sum_{i=1}^{K(n)}(I_{\{X_{1, n}=i\}}-p_n(i))\ln p_n(i)\right|^{2+\delta}<\infty.
\deq
From (\ref{thm1-102}) and (\ref{thm1-103}), we have
\begin{align}\label{2203}
&\pp\left(\frac{\sqrt{n}}{\sigma_n}\left|\sum_{i=1}^{K(n)}\hat p_n(i)\ln {\frac{\hat p_n(i)}{p_n(i)}}\right|>a\right)\nonumber\\
\le & \pp\left(\frac{\sqrt{n}}{\sigma_n}\sum_{i=1}^{K(n)}\frac{\left(\hat p_n(i)-p_n(i)\right)^2}{p_n(i)}>a\right)\le  \frac{K(n)-1}{a\sqrt{n}\sigma_n}.
\end{align}
By taking $a=\sqrt{\sqrt{2\pi}\frac{K(n)}{\sqrt{n}\sigma_n}}$,  (\ref{3}) holds by using (\ref{2201}), (\ref{2202}), (\ref{2}) and (\ref{2203}).
\end{proof}
\begin{proof}[{\bf Proof of Theorem \ref{thm2-3}}]
From (\ref{clt1-p1}), we have
$$
\hat H_n- H_n=-\sum_{i=1}^{K(n)}(\hat p_n(i)-p_n(i))\ln p_n(i) -\sum_{i=1}^{K(n)}\hat p_n(i)\ln \frac{\hat p_n(i)}{p_n(i)}.
$$
By using the definition in (\ref{clt1-p1t}), we have
\beq\label{mod1-p2}
\aligned
\frac{\sqrt{n}}{b_n\sigma_n}\sum_{i=1}^{K(n)}(\hat p_n(i)-p_n(i))\ln p_n(i)=\frac{1}{b_n\sqrt{n}\sigma_n}\sum_{k=1}^nT_{k,n}.
\endaligned
\deq
From the assumption of
$$
\sup_{n\ge 1}\ee\exp\left(\frac{\delta}{\sigma_n}|T_{1, n}|\right)<\infty,
$$
 then for any $\lambda\in\rr$, we have
\begin{align*}
&\lim_{n\rightarrow\infty}\frac{1}{b_n^2}\log\ee
\exp\left\{\lambda\frac{b_n}{\sqrt{n}\sigma_n}\sum_{k=1}^nT_{k,n}\right\}\\
=&\lim_{n\rightarrow\infty}\frac{n}{b_n^2}\log\ee
\exp\left\{\lambda\frac{b_n}{\sqrt{n}\sigma_n}T_{1,n}\right\}\\
=&\lim_{n\rightarrow\infty}\frac{n}{b_n^2}\log\ee\left(1+\frac{ \lambda b_n}{\sqrt{n}\sigma_n} T_{1,n}+\frac{ \lambda^2 b_n^2}{2n\sigma_n^2} T_{1,n}^2+o\left(\frac{b_n^2}{n}\right)\right)\\
=&\lim_{n\rightarrow\infty}\frac{n}{b_n^2}\log\left(1+\frac{ \lambda^2 b_n^2}{2n}+o\left(\frac{b_n^2}{n}\right)\right)\\
=&\frac{\lambda^2}{2}.
\end{align*}
By using G$\ddot{a}$rtner-Ellis Theorem (see \cite{2}), we have
\beq\label{mod1-p21}
\aligned
\lim_{n\rightarrow\infty}\frac{1}{b_n^2}\log\pp\left(\frac{\sqrt{n}}{b_n\sigma_n}\left|\sum_{i=1}^{K(n)}(\hat p_n(i)-p_n(i))\ln p_n(i)\right|>r\right)=-\frac{r^2}{2}.
\endaligned
\deq
Next, in order to prove Theorem \ref{thm2-3}, it is enough to show the following claim: for any $\varepsilon > 0$,
\beq\label{mod1-p22}
\lim_{n\rightarrow\infty}\frac{1}{b_n^2}\log\pp
\left(\frac{\sqrt{n}}{b_n\sigma_n}\left|\sum_{i=1}^{K(n)}\hat p_n(i)\ln \frac{\hat p_n(i)}{p_n(i)}\right|>\varepsilon\right)= -\infty.
\deq
From (\ref{thm1-102}), (\ref{thm1-103}) and Lemma \ref{H}, it is easy to check that
\begin{align*}
&\pp\left(\frac{\sqrt{n}}{b_n\sigma_n}\left|\sum_{i=1}^{K(n)}\hat p_n(i)\ln \frac{\hat p_n(i)}{p_n(i)}\right|>\varepsilon\right)\\
\le &\pp\left(\frac{\sqrt{n}}{b_n\sigma_n}\left|\sum_{i=1}^{K(n)}\frac{\left(\hat p_n(i)-p_n(i)\right)^2}{p_n(i)}\right|>\varepsilon\right)\\
\le & \sum_{i=1}^{K(n)}\pp\left(\frac{\sqrt{n}}{b_n\sigma_n}\left|\frac{\left(\hat p_n(i)-p_n(i)\right)^2}{p_n(i)}\right|>\varepsilon p_n(i)\right)\\
=&\sum_{i=1}^{K(n)}\pp\left(\left|\frac{1}{n}\sum_{j=1}^n(I_{\{X_j=i\}}-p_n(i))\right|>\frac{\sqrt{\varepsilon b_n\sigma_n}p_n(i)}{n^{1/4}}\right)\\
\le & 2\sum_{i=1}^{K(n)}\exp\left(-2\varepsilon\sqrt{n}b_n\sigma_np_n^2(i)\right),
\end{align*}
which implies (\ref{mod1-p22}).
\end{proof}

\end{document}